\documentclass[11pt]{amsart}

\usepackage{amsmath,amssymb,amsthm,graphics,amscd,mathrsfs}
\usepackage{amscd, amsfonts, amssymb, amsthm}
\usepackage{parskip, fullpage, verbatim}
\usepackage[colorlinks, breaklinks, linkcolor=blue]{hyperref} 
\usepackage{breakurl}
\usepackage{array}
\usepackage{latexsym}
\usepackage{enumerate}
\usepackage{longtable}
\usepackage{graphicx, subcaption}

\usepackage{etoolbox}
\makeatletter
\patchcmd\deferred@thm@head
  {\addvspace{-\parskip}}
  {}
  {}{\typeout{\string\deferred@thm@head patch failed!}}
\makeatletter

\newcommand\CA{{\mathscr A}} 
\newcommand\CB{{\mathcal B}}

\newcommand\CF{{\mathcal F}}

\newcommand\CL{{\mathcal L}}
\newcommand\CM{{\mathcal M}}

\newcommand\BBC{{\mathbb C}}
\newcommand\BBK{{\mathbb K}}
\newcommand\BBN{{\mathbb N}}
\newcommand\BBQ{{\mathbb Q}}

\newcommand\codim{\operatorname{codim}}

\newcommand\Der{{\operatorname{Der}}}

\newcommand\rk{\operatorname{rk}}

\newcommand\cl{\operatorname{cl}}

\numberwithin{equation}{section}

\theoremstyle{plain}

\newtheorem{theorem}[equation]{Theorem}

\newtheorem{corollary}[equation]{Corollary}
\newtheorem{proposition}[equation]{Proposition}
\theoremstyle{definition}
\newtheorem{definition}[equation]{Definition}
\newtheorem{remark}[equation]{Remark}

\newtheorem{example}[equation]{Example}

\begin{document}

\title[Combinatorially formal arrangements\\ are not determined by their points and lines]
{Combinatorially formal arrangements\\ are not determined by their points and lines}

\author[T. M\"oller]{Tilman M\"oller}
\address
{Fakult\"at f\"ur Mathematik,
Ruhr-Universit\"at Bochum,
D-44780 Bochum, Germany}
\email{tilman.moeller@rub.de}

\allowdisplaybreaks

\begin{abstract}
An arrangement of hyperplanes is called formal, if the relations between the hyperplanes are generated by relations in codimension 2. Formality is not a combinatorial property, raising the question for a characterization for combinatorial formality. A sufficient condition for this is if the underlying matroid has no proper lift with the same points and lines. We present an example of a matroid with such a lift but no non-formal realization, thus showing that above condition is not necessary for combinatorial formality.
\end{abstract}
\keywords{matroid, arrangement, formality, weak map image, free erection}
\maketitle

\section{Introduction}

Let $\BBK$ be a field. An \emph{arrangement} $\CA$ is a finite collection of linear subspaces of $V=\BBK^\ell$ of codimension 1. 
Each hyperplane $H\in \CA$ is given as the kernel of a linear functional $\alpha_H\in V^*$ that is unique up to a scalar. 
Let $L(\CA)$ be the collection of all nonempty intersections of hyperplanes in $\CA$. 
We require $V\in L(\CA)$ as well. 
The set $L(\CA)$ is ordered by reverse inclusion and ranked by $r(X)=\codim X$ for $X\in L(\CA)$. 
In fact, $L(\CA)$ has the structure of a geometric lattice, called the \emph{lattice of flats}. 
It contains the combinatorial data of the arrangement $\CA$ and defines the underlying matroid $\CM(\CA)$. 
Two arrangements are called \emph{(combinatorially) isomorphic} if their underlying matroids are equal up to isomorphism. 
Any property that is invariant under such an isomorphism is called combinatorial.\\
Consider the linear map $\Phi:\BBK^{\CA}:=\bigoplus\limits_{H\in \CA}\BBK e_H \rightarrow V^*$ defined by $\Phi(e_H)=\alpha_H$. 
If $\ker \Phi$ is generated by its elements of weight at most three, 
i.e.~vectors with 3 or fewer nonzero entries, 
$\CA$ is called  \emph{formal}, see \cite{falkrand:homotopy1}. In \cite{yuzv:formality}, Yuzvinsky showed that formality is not combinatorial, so it is natural to ask whether matroids that admit only formal arrangements can be characterized intrinsically. A matroid is called \emph{taut} if it is not a proper quotient of a matroid with the same points and lines, see Definition \ref{def:taut}. An arrangement with an underlying taut matroid is necessarily formal. For a survey on this topic, see \cite[Ch.~3]{falk:formality}. In \emph{loc.~cit.}, Falk asked whether there is a non-taut matroid that only admits formal arrangements as realizations. In this paper we give such an example, thus showing the following.
\begin{theorem}
\label{thm:main-theorem}
There is a realizable matroid $M$ that is not taut such that every realization of $M$ is formal.
\end{theorem}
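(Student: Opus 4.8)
We need to construct a specific realizable matroid $M$ that is not taut, yet every realization is formal. Let me think about how to approach this.

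The plan is to proceed by explicit construction. First I would fix a small rank — rank 3 or rank 4 — and search for a matroid $M$ that has a proper erection (a lift with the same points and lines), so that $M$ is manifestly not taut, while simultaneously every realization of $M$ satisfies the formality condition on $\ker\Phi$. Since tautness is a sufficient-but-not-necessary condition for combinatorial formality, the whole point is to separate the two notions, so the construction must be somewhat delicate. The natural candidates are the matroids appearing in Yuzvinsky's work on non-combinatorial formality (the $X_3$-type configurations), suitably modified: one wants the "extra" relations forced on every realization to be consequences of rank-2 relations even though the matroid's lines alone do not detect a non-trivial quotient obstruction — wait, that's backwards. We want $M$ non-taut, meaning there IS a proper lift with the same points and lines; we then have to argue that despite this, no realization can be non-formal.

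The key steps, in order: (1) Exhibit the matroid $M$ by listing its ground set and its hyperplanes (or, in rank 3, its lines), presented concretely enough that realizability can be verified by writing down an explicit coordinate matrix over $\BBQ$ (or a suitable field). (2) Exhibit a proper erection $\widehat M$ of $M$ — i.e., a matroid of rank one greater whose "shadow" on points and lines coincides with that of $M$ — thereby certifying $M$ is not taut; here one can appear to the free erection / Crapo's erection theorem to organize the search, but for the final write-up a single explicit $\widehat M$ suffices. (3) Analyze $\ker\Phi$ for an arbitrary realization: parametrize realizations of $M$ (the realization space may be positive-dimensional), and show that for every point of this space the kernel of $\Phi\colon\BBK^{\CA}\to V^*$ is spanned by weight-$\le 3$ vectors. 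The cleanest way is to identify enough circuits of $M$ (which give weight-$\le 3$ elements of $\ker\Phi$ automatically, since a circuit of size $k$ yields a dependency supported on $k$ hyperplanes, and one restricts attention to the small circuits) and then show by a rank count that these already span all of $\ker\Phi$, whose dimension is $|\CA| - r(M)$, independently of the realization.

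The main obstacle I expect is step (3): showing the small-circuit relations span $\ker\Phi$ for \emph{every} realization, not just the generic one. On a Zariski-open subset of the realization space a dimension count does the job, but a non-taut matroid is precisely the kind of object where special realizations can have extra dependencies; one must rule out that on some proper subvariety a new, genuinely weight-$\ge 4$ relation appears that is not a combination of the weight-$3$ ones. I would handle this either by showing the realization space is irreducible and the "formal locus" is both open and closed in it, or — more robustly — by a direct combinatorial argument: verify that the small circuits of $M$ form a \emph{spanning} set of circuits (every circuit is in the closure, under the circuit elimination axiom, of circuits of size $\le 3$, i.e., $M$ is what one might call "line-closed" or has a circuit basis in low weight), which then forces formality of every realization purely from the matroid, with no appeal to genericity. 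Verifying this spanning property for the explicit $M$ of step (1) is the computational heart of the argument and is where a computer algebra check (\Sage{} or \Singular{}) would be invoked.
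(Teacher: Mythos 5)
Your steps (1) and (2) match the paper's: one writes down an explicit rank-$3$ matroid $M$ on $13$ points with a rational realization, and exhibits a rank-$4$ erection $N$ with the same points and lines, so $M$ is not taut. The gap is in your step (3), which is the heart of the theorem, and neither of the two methods you sketch works. The ``open and closed'' idea fails because the formal locus is only Zariski open in the realization space (the dimension of the span of the weight-$\le 3$ elements of $\ker\Phi$ is lower semicontinuous in the realization), and there is no reason for it to be closed; in Yuzvinsky's Example \ref{ex:yuzv} formal and non-formal realizations of the same matroid coexist, so irreducibility of the realization space cannot rescue the argument in general. The ``spanning set of small circuits'' idea cannot be purely combinatorial either: by Yuzvinsky's theorem formality is not determined by the matroid, so a combinatorial condition on circuits that certified formality of \emph{every} realization would need a proof you do not supply, and the natural candidate you name (line-closure) is known not to imply formality.

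The idea you are missing is to attack the problem through the lifts rather than through $\ker\Phi$. If some realization $\CA$ of $M$ were non-formal, its formalization $\CA_F$ (Remark \ref{rem:formality}) would realize a matroid of rank $>3$ having $M$ as a proper quotient with the same points and lines, and hence would produce a \emph{realizable} non-trivial erection of $M$. The paper therefore (a) classifies \emph{all} erections of $M$ via Crapo's characterization of erections by their copoints (Theorem \ref{thm:crapo-charac}) and shows that $N$ is the only non-trivial one (Proposition \ref{prop:onlyN}) --- exhibiting a single lift, as your step (2) proposes, certifies non-tautness but is not enough for this part; and (b) shows that $N$ is not realizable over \emph{any} field, because it has both the Fano matroid $F_7$ and the non-Fano matroid $F_7^-$ as minors, which impose contradictory conditions on the characteristic. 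This reduces the universal formality claim to a finite matroid-theoretic check and avoids any analysis of $\ker\Phi$ over the realization space.
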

\textbf{Acknowledgements:} The author thanks Michael Falk for helpful discussions regarding the content of this paper.
\pagebreak
\section{Recollections and Preliminaries}
Let $E$ be a finite set. A \emph{matroid} $M$ on the \emph{ground set} $E$ is a collection $\CB$ of subsets of $E$ subject to
\begin{enumerate}
\item[(i)] $\CB\neq \emptyset$ and
\item[(ii)] for all $B,B'\in \CB$ and every $f\in B'\setminus B$ there is an $e\in B\setminus B'$ such that $\left(B'\setminus\{f\}\right)\cup\{e\}\in \CB$.
\end{enumerate}
An element $B\in \CB$ is called a \emph{basis} or a \emph{base} of $M$. Note that all bases have the same cardinality. Any subset of a base is an \emph{independent set} of $M$. Subsets of $E$ that are not independent are \emph{dependent}, the minimal dependent sets are called \emph{circuits}.

The \emph{rank} $\rk(X)$ of a subset $X\subset E$ is the size of a maximal independet subset of $X$, and the rank of $M$ is defined by $\rk(M)=\rk(E)$. There is a notion of closure on $M$ sending subsets to their maximal supersets of the same rank, i.e.~
\[
\cl(X):=\overline X:=\left\{e\in E\mid \rk(X)=\rk(X\cup\{e\})\right\}.
\]
A set $X\subset E$ is called \emph{closed} or a \emph{flat of $M$} if $X=\overline X$. The set $\CL=\CL(M)$ of all flats is partially ordered by inclusion. It has the structure of a geometric lattice and is called the \emph{lattice of flats}. Flats of rank one (resp.~two) are called \emph{points} (resp.~\emph{lines}) of $M$. An element $e\in E$ that is dependent on its own is called a \emph{loop}, two dependent elements $\{i,j\}$ are called \emph{parallel}. A matroid is called \emph{simple} if it has no loops or parallel elements.
A matroid is completely determined by its bases, circuits, rank function, closure or the lattice of flats.

For ease of notation we write $\CL_k$ for the elements of $\CL$ of rank $k$, and $\CL_k^{>s}$ for flats of rank $k$ and cardinality greater than $s$. We call $\CL_{\rk(E)-1}$ the set of \emph{copoints of $M$}. In fact, the collection of copoints contains enough information to uniquely define the matroid.
\begin{definition}
\label{def:quotient}
Let $M,N$ be two matroids on the same ground set $E$. If any independent set of $M$ is independent in $N$, we call $M$ a \emph{weak map image} of $N$ and write $M\prec N$. If $M$ is a weak map image of $N$ and further $\CL(M)\subset \CL(N)$, we call $M$ a \emph{quotient} of $N$. Note that $\prec$ defines a partial order on the class of all matroids.
\end{definition}
Let $X\subset E$. The \emph{deletion of $X$ from $M$} is the matroid $M-X$ on the ground set $E \setminus X$. Its independent sets are the independent sets of $M$ disjoint from $X$. The \emph{contraction of $X$ from $M$} is the matroid $M/X$ on $E\setminus X$. Its circuits are the minimal non-empty sets in $\{C\setminus X\mid C\in \mathcal{C}(M)\}$. A \emph{minor of M} is a matroid that arises as a sequence of deletions and contractions of $M$.

Sometimes the dependencies in $M$ can be realized as the linear dependencies of a set of vectors. Let $\rk(M)=\ell$. If there is a set $A=\{v_1,\dots,v_n\}$ of vectors of $\BBK^\ell$ such that $B\in \CB$ if and only if $\{v_i\mid i\in B\}$ is a basis of $\BBK^\ell$, then $M$ is called \emph{$\BBK$-linear} and $A$ is called a \emph{realization} of $M$. Due to the next proposition, to show that a matroid $M$ is not realizable over a certain field $\BBK$, it suffices to find a minor of $M$ that is not realizable over $\BBK$.
\begin{proposition}[{\cite[Prop.~3.2.4]{oxley:matroids}}]
\label{prop:minorrep}
If a matroid is realizable over a field $\BBK$, then all its minors are as well.
\end{proposition}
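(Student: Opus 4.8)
The plan is to reduce the statement to two elementary building blocks, a single deletion and a single contraction, and then iterate. Every minor of $M$ is obtained from $M$ by a finite sequence of operations of the form $N\mapsto N-e$ and $N\mapsto N/e$ with $e$ a single ground-set element, so it suffices to show that $\BBK$-realizability is preserved under each of these two operations. Throughout, record a realization of a matroid $N$ of rank $r$ as an indexed family $(v_i)_{i\in E}$ of vectors in $\BBK^{r}$ with the property that $B$ is a basis of $N$ iff $(v_i)_{i\in B}$ is a basis of $\BBK^{r}$; since such a family spans $\BBK^{r}$, this is equivalent to the statement that $I\subseteq E$ is independent in $N$ iff $(v_i)_{i\in I}$ is linearly independent. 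Deletion is then immediate: by definition the independent sets of $N-e$ are exactly the independent sets of $N$ not containing $e$, so the subfamily $(v_i)_{i\in E\setminus\{e\}}$ realizes $N-e$ once $\BBK^{r}$ is replaced by the span of that subfamily.

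For contraction, first dispose of the case that $e$ is a loop of $N$, i.e.\ $v_e=0$: then $N/e=N-e$ and the previous paragraph applies. If $v_e\neq 0$, extend $v_e$ to a basis of $\BBK^{r}$ and let $\pi\colon\BBK^{r}\to\BBK^{r-1}$ be the linear surjection forgetting the $v_e$-coordinate, so that $\ker\pi=\BBK v_e$ and $\pi$ is defined over $\BBK$. The claim is that $(\pi(v_i))_{i\in E\setminus\{e\}}$ realizes $N/e$ in $\BBK^{r-1}$. This rests on two facts. First, a standard property of projections with one-dimensional kernel: for $Y\subseteq E\setminus\{e\}$, the vectors $(\pi(v_i))_{i\in Y}$ are linearly independent if and only if the vectors $(v_i)_{i\in Y}$ together with $v_e$ are linearly independent in $\BBK^{r}$. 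Second, the combinatorial counterpart, which follows readily from the description of the circuits of $N/e$ recalled above: $Y$ is independent in $N/e$ if and only if $Y\cup\{e\}$ is independent in $N$ (using that $e$ is not a loop). Concatenating these two equivalences is exactly the assertion that $(\pi(v_i))_{i\in E\setminus\{e\}}$ is a realization of $N/e$; one also notes $\rk(N/e)=r-1$, matching the ambient dimension.

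It remains to iterate. Write an arbitrary minor of $M$ as $(M/X)-X'$ and contract the elements of $X$ one at a time; an element of $X$ may turn into a loop after some of the contractions, but that case is covered above, and each contraction step outputs a $\BBK$-realization of the intermediate matroid in dimension decreased by one (non-loop) or unchanged (loop). Finishing with the deletions of the elements of $X'$ and inducting on the length of this sequence proves Proposition~\ref{prop:minorrep}. I do not expect a genuine obstacle: the whole argument is elementary linear algebra, and the only point requiring care is the equivalence "$Y$ independent in $N/e$ $\iff$ $Y\cup\{e\}$ independent in $N$", which is forced by the definition of the minor $N/e$ via its circuits given earlier in this section. (Alternatively, one could deduce the contraction case from the deletion case together with the preservation of $\BBK$-realizability under matroid duality and the identity $N/e=(N^{*}-e)^{*}$, but the projection argument above is more self-contained.)
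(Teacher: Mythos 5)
Your proof is correct. The paper offers no proof of this proposition at all---it is quoted verbatim from Oxley---and your reduction to single steps (deletion as restriction of the vector family to its span, contraction of a non-loop $e$ as projection along $v_e$, with the loop case collapsing to deletion) is precisely the standard argument behind the cited result, with the two key equivalences (the linear-algebra fact about projections with kernel $\BBK v_e$, and ``$Y$ independent in $N/e$ iff $Y\cup\{e\}$ independent in $N$'') correctly identified and correctly matched.
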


\begin{example}
\label{ex:FanoMatroid}
Define $F_7$ as the matroid of rank $3$ on $E=\{0,\dots,6\}$ with non-trivial lines
\[
\CL_2^{>2}(F_7) = \{015,024,036,123,146,256,345\}
\]
and define $F_7^-$ as the matroid of rank $3$ on the same ground set $E$ with non-trivial lines
\[
\CL_2^{>2}(F_7^-) = \CL_2^{>2}(F_7) \setminus \{345\}.
\]
$F_7$ is called the \emph{Fano matroid} and $F_7^-$ is called the \emph{non-Fano matroid}. Pictures of the two matroids are given in Figure \ref{fig:FanoEx}. 
In the pictures, every point is a point of the matroid, 
and three points are connected by a line segment if the three points are contained in a flat of rank two.
Let $M\in \{F_7, F_7^-\}$ and let $A = (I_3 \mid X)$ be a representation of $M$ over a field $\BBK$, where $I_3$ is the $3\times 3$ identity matrix and $X$ is a $3\times 4$-matrix, such that the $i$-th column represents the element $i\in \{0,\dots,6\}$. Then
\[
X=
\left(
\begin{array}{llll}
0 & 1 & 1 & 1 \\
1 & 0 & 1 & 1 \\
1 & 1 & 0 & 1 \\
\end{array}
\right)
\]
and $M=F_7$ if and only if $\operatorname{char}(\BBK) = 2$ (cf.~\cite[Prop.~6.4.8]{oxley:matroids}). Thus, the Fano matroid is only realizable over fields of characteristic two and the non-Fano matroid is only realizable over fields of characteristic different from two.
\end{example}

\begin{figure}[]
\centering
\begin{subfigure}[b]{0.45\linewidth}
\includegraphics[width=\linewidth]{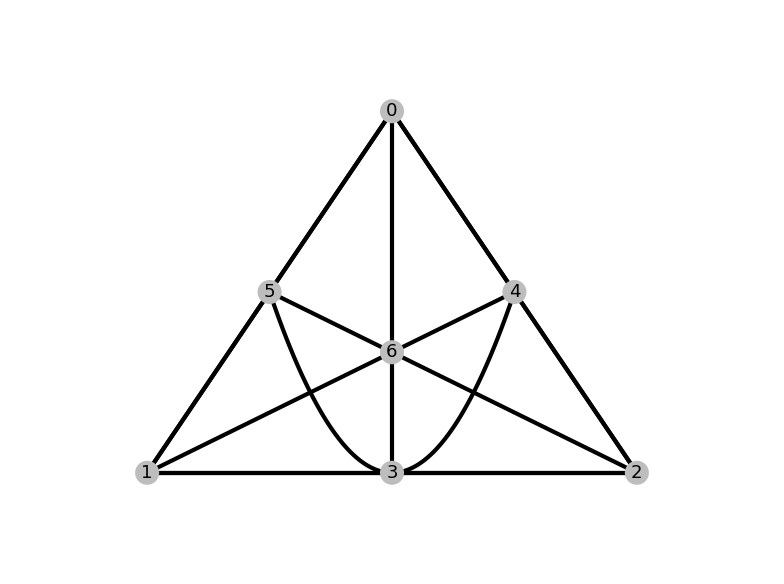}
\caption{Fano matroid $F_7$}
\end{subfigure}
\begin{subfigure}[b]{0.45\linewidth}
\includegraphics[width=\linewidth]{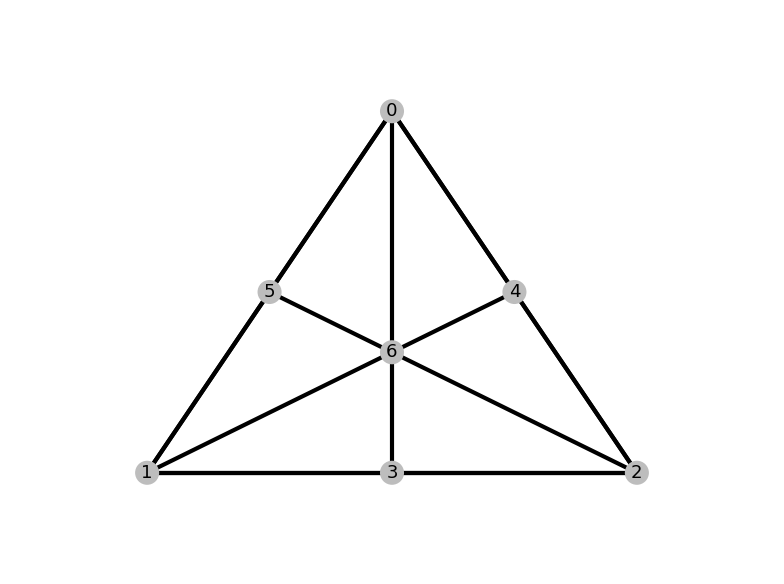}
\caption{non-Fano matroid $F_7^-$}
\end{subfigure}
\caption{Matroids from Example \ref{ex:FanoMatroid}}
\label{fig:FanoEx}
\end{figure}

Let $V=\BBK^\ell$. A finite set $\CA=\{H_1,\dots,H_n\}$ with $H_1,\dots,H_n$ (linear) hyperplanes in $V$ is called a \emph{(central) arrangement}. 
Choose linear forms $\alpha_i\in V^*$ such that $\ker \alpha_i = H_i$. 
Let $M=\CM(\CA)$ be the $\BBK$-linear matroid realized by $(\alpha_1,\dots,\alpha_n)$. It contains the combinatorial data of the arrangement $\CA$. The lattice $\CL$ of $\CM(\CA)$ is canonically isomorphic to the collection of all nonempty intersections of hyperplanes of $\CA$. The \emph{rank} of $\CA$ is the codimension of the intersection of all its hyperplanes, i.e.~$r(\CA)=\codim(\bigcap H)$. It coincides with the rank of the underlying matroid.

Probably the most studied properties of arrangements are freeness and asphericity. 
Let $S=S(V^*)$ be the symmetric algebra of $V^*$. 
The product $Q(\CA) = \prod\limits_{i=1}^n \alpha_i\in S$ is called the \emph{defining polynomial} of $\CA$. 
Note that after chosing a basis $(e_1,\dots,e_\ell)$ of $V$ 
and a dual basis $(x_1,\dots,x_\ell)$ of $V^*$, we have $S \cong \BBK[x_1,\dots,x_\ell]$.
Let 
$$\Der(S)=\{\theta:S\rightarrow S\mid \theta(fg)=f\theta(g)+g\theta(f) \text{ for all } f,g\in S\}$$
be the $S$-module of formal derivations of $S$.
An arrangement is called \emph{free} if the $S$-module 
\[
D(\CA)=\{\theta \in \Der(S) \mid \theta(Q(\CA))\in Q(\CA)S\}
\]
is free.
A complex arrangement is called \emph{aspherical} if the complement $\BBC^\ell \setminus \bigcup H$ is a $K(\pi,1)$-space. Whether freeness and asphericity are combinatorial properties are important open problems in arrangement theory.
A comprehensive summary about arrangement theory can be found in \cite{orlikterao:arrangements}.
\begin{definition}
Let $\BBK^\CA=\bigoplus\limits_{H\in \CA} \BBK e_H$ be the vector space with basis indexed by the hyperplanes in $\CA$ and define $\Phi:\BBK^\CA\rightarrow V^*$ by $\Phi(e_H)=\alpha_H$ and linear extension. Let $F\subset \ker\Phi$ be the subspace generated by all elements of $\ker\Phi$ with at most three nonzero entries. Then $\CA$ is called \emph{formal} if $F=\ker\Phi$.
\end{definition}
The notion of formality first appeared in \cite{falkrand:homotopy1}, where it was introduced as a necessary condition for asphericity. Later, Yuzvinsky showed that it is also necessary for free arrangements.
\begin{theorem}
Let $\CA$ be an arrangement.
\begin{enumerate}
\item[(i)]\cite{falkrand:homotopy1} If $\CA$ is aspherical, then it is formal.
\item[(ii)]\cite{yuzv:formality} If $\CA$ is free, then it is formal.
\end{enumerate}
\end{theorem}
Formality is not a combinatorial property. The first example in the literature is due to Yuzvinsky.
\begin{example}{\cite[Ex.~2.2]{yuzv:formality}}
\label{ex:yuzv}
Define $Q_0=xyz(x+y+z)(2x+y+z)(2x+3y+z)(2x+3y+4z)$ and define arrangements $\CA_1$ and $\CA_2$ by $Q(\CA_1)=Q_0\cdot (3x+5z)(3x+4y+5z)$ and $Q(\CA_2)=Q_0\cdot (x+3z)(x+2y+3z)$. Then the underlying matroids of $\CA_1$ and $\CA_2$ are the same, but $\CA_1$ is formal while $\CA_2$ is not.
\end{example}
Since formality is not combinatorial, it makes sense to ask for a property of the matroid such that each $\BBK$-representation of it is formal.
\begin{remark}
\label{rem:formality}
Consider the map $\pi:V\rightarrow \BBK^\CA$ defined by $\pi(x) = (\alpha_1(x),\dots,\alpha_n(x))^T$. If $y=(y_1,\dots,y_n)\in \ker\Phi$, consider the scalar product $\pi(x) y=\sum y_i \alpha_i(x) = \Phi(y)(x)=0$, so $\operatorname{Im} \pi = \ker\Phi^\bot$. Thus $\ker \Phi$ contains all the information of $\CA$ and $\CA$ can be reconstructed via 
\[
\CA \cong \{\ker\Phi^\bot\cap\{x_i=0\}\mid i=1,\dots,n\}.
\]
The same construction for $F$ yields
\(
\CA_F:=\{F^\bot\cap\{x_i=0\}\mid i=1,\dots,n\},
\)
the \emph{formalization} of $\CA$. Clearly, $r(\CA)\leq r(\CA_F)$ and $r(\CA)=r(\CA_F)$ if and only if $\CA$ is formal. Furthermore, it is easy to see that $\CM(\CA)$ is a quotient of $\CM(\CA_F)$ with the same points and lines.
\end{remark}
\begin{definition}{\cite[Def.~3.5]{falk:formality}}
\label{def:taut}
A matroid $M$ is called \emph{taut} if it is not a quotient of any matroid of higher rank with the same points and lines.
\end{definition}
Because of Remark \ref{rem:formality}, a $\BBK$-representation of a taut matroid is always formal, since its formalization cannot admit it as a proper quotient. This paper is dedicated to showing that the reverse implication is false, which answers a question raised by Falk in \cite{falk:formality}. To validate our claim, we use the theory established  in \cite{crapo:erectinggeometries} about erections of matroids.
\begin{definition}
Let $M$ be a matroid on $E$ of rank $r>1$. The \emph{truncation of $M$} is the (unique) matroid $T$ of rank $r-1$ with $\CL(T)=\CL_{<r}(M)\cup E$. Thus, $T \prec M$. A matroid $N$ is an \emph{erection of $M$} if the truncation of $N$ is isomorphic to $M$. We further say $M$ is the \emph{trivial erection} of itself.
\end{definition}
Note that while the truncation is uniquely defined, there can be many erections of a matroid. Let $\mathcal{E}(M)$ be the collection of erections of $M$.
\begin{theorem}{\cite[Thm.~9]{crapo:erectinggeometries}}
Let $M$ be a matroid, then the set $\mathcal{E}(M)$ together with the relation $\prec$ from Definition \ref{def:quotient} has the structure of a geometric lattice. Its minimal element is the trivial erection $M$. Define the \emph{free erection of $M$} as the maximal element of $\mathcal{E}(M)$.
\end{theorem}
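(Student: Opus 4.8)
The plan is to replace each erection of $M$ by a combinatorial datum, to recognise the weak map order $\prec$ as a natural order on these data, to construct joins, and then to verify the geometric-lattice axioms; write $r=\rk(M)$. A nontrivial erection $N$ of $M$ has rank $r+1$, and since truncating $N$ returns $M$, the flats of $N$ of rank $<r$ coincide with those of $M$; the only further flats of $N$ are its copoints $\CH(N):=\CL_r(N)$ and $E$ itself. Thus $N$ is determined by $\CH(N)$, and conversely a family $\CH$ of proper subsets of $E$ is $\CH(N)$ for some nontrivial erection $N$ exactly when it is \emph{admissible}: every $H\in\CH$ has $\rk_M(H)=r$, the members of $\CH$ cover $E$, every copoint of $M$ is contained in some member, and $H\cap F$ is a flat of $M$ for all $H\in\CH$ and all copoints $F$ of $M$. (These are Crapo's conditions; the last one already forces $H\cap H'$ to be a flat of $M$ for distinct $H,H'\in\CH$, which is what makes $\CL_{<r}(M)\cup\CH\cup\{E\}$ a geometric lattice of rank $r+1$ with truncation $M$.)

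Next I would translate the order. The trivial erection $M$ has rank $r$, so every $(r+1)$-subset of $E$ is dependent in $M$; hence $M\prec N$ for every nontrivial erection $N$, and $M$ is the minimum of $\CE(M)$. Among nontrivial erections, $N\prec N'$ holds if and only if every copoint of $N'$ lies inside some copoint of $N$ — informally, passing up refines the copoint family and enlarges the set of bases. So $\CE(M)$ is order-isomorphic to the poset of admissible families under this refinement order with $M$ adjoined as a bottom element.

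For the lattice operations I would exhibit joins explicitly. Given admissible $\CH_1,\CH_2$, the natural candidate for the copoint family of $N_1\vee N_2$ is the set of inclusion-maximal members of $\{\,H_1\cap H_2\mid H_i\in\CH_i,\ \rk_M(H_1\cap H_2)=r\,\}$, chosen so that the dependent $(r+1)$-sets of $N_1\vee N_2$ are exactly those dependent in both $N_1$ and $N_2$. This candidate need not be admissible as it stands, and the heart of the theorem is that it admits a canonical correction — enlarging its members until every copoint of $M$ is again covered and every pairwise intersection is again a flat of $M$ — yielding a genuine erection that one then checks is the least erection above both $N_1$ and $N_2$. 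A finite poset with a minimum in which all joins exist is a lattice, so $\CE(M)$ is one; its maximum, the join of all its elements, is by definition the free erection, and one verifies that its copoints are precisely the inclusion-maximal proper subsets $H\subsetneq E$ with $\rk_M(H)=r$ and $H\cap F$ a flat of $M$ for every copoint $F$ of $M$. (Dually, meets arise by completing $\CH_1\cup\CH_2$ to the smallest admissible family containing it; the degenerate case in which these corrections collapse to the trivial erection is exactly the case where $M$ admits no proper erection.) To conclude that the lattice is geometric one then shows it is atomic, i.e.\ every erection is the join of the minimal nontrivial erections it dominates, and that the height of an admissible family in the poset defines a semimodular rank function.

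The main obstacle is precisely the analysis of the correction step. Admissibility is a semimodularity condition on $\CL_{<r}(M)\cup\CH\cup\{E\}$, and one must show that it is preserved under the closure-type repair of $\{H_1\cap H_2\}$ (and of $\CH_1\cup\CH_2$), that the repaired family really is the extreme common bound, and that the resulting rank function on $\CE(M)$ is semimodular; once that is in place, the identification of $\CE(M)$ with a geometric lattice and the description of the free erection as its top follow formally.
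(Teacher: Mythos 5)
The paper does not prove this statement---it is quoted from Crapo---so your sketch has to stand on its own, and as written it does not: two of its load-bearing claims are false or missing. First, your ``admissibility'' conditions are not Crapo's conditions and are strictly weaker: you drop condition (iii) of Theorem \ref{thm:crapo-charac}, that each basis of $M$ lies in a \emph{unique} block, and replace it by covering statements. Take $M=U_{3,5}$, the uniform matroid of rank $3$ on $\{1,\dots,5\}$, and $\CH=\{\{1,2,3,4\},\{1,2,3,5\},\{1,4,5\}\}$. Every member spans and is $2$-closed, the members cover $E$, every copoint of $M$ (every $2$-subset) lies in a member, and every intersection with a copoint is a flat; yet the basis $\{1,2,3\}$ lies in two blocks, so $\CH$ is the copoint set of no matroid (two distinct rank-$3$ flats of a rank-$4$ matroid cannot both contain a common set of rank $3$). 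The same example refutes your parenthetical claim that your last condition forces $H\cap H'$ to be a flat: $\{1,2,3,4\}\cap\{1,2,3,5\}=\{1,2,3\}$ is not a flat of $U_{3,5}$. Since your entire translation of $\mathcal{E}(M)$ into ``admissible families'' rests on this equivalence, the foundation of the argument is wrong; Crapo's basis-uniqueness condition is precisely what carries the lattice structure and cannot be replaced by covering conditions.

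Second, your join construction goes the wrong way. With the paper's order, any $N$ with $N\succ N_1,N_2$ must have its dependent $(r{+}1)$-sets contained in those common to $N_1$ and $N_2$; your candidate family already realizes exactly that common set, so ``enlarging its members'' when it fails to be an erection creates new dependencies and produces a matroid that is no longer above $N_1$ and $N_2$. Enlarging or merging blocks is the \emph{meet} construction (essentially your own parenthetical); the join admits no such canonical completion, and in Crapo's argument its existence is deduced from the existence of meets together with a maximum element, where the existence of that maximum---the free erection---is itself the substantive iterative construction. Your sketch supplies neither that construction nor the atomicity and semimodularity arguments needed for ``geometric''; these are exactly the points you defer with ``one then checks'' and flag as the main obstacle. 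So the proposal is a plan built on an incorrect characterization, with the heart of the theorem left unproved.
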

Let $M$ be a matroid on $E$ and let $k\in \BBN$. A subset $X\subset E$ is \emph{$k$-closed} if it contains the closures of all its $k$-element subsets. We say \emph{$X$ spans $M$} if $\overline{X} = M$.
The following theorem characterizes erections of $M$ by their copoints.
\begin{theorem}{\cite[Thm.~2]{crapo:erectinggeometries}}
\label{thm:crapo-charac}
Let $M$ be a matroid of rank $r$ on $E$. A set $\CF$ of subsets (called blocks) of $E$ is the set of copoints of an erection of $M$ if and only if
\begin{enumerate}[(i)]
\item each block spans $M$;
\item each block is $(r-1)$-closed;
\item each basis of $M$ is contained in a unique block.
\end{enumerate}
\end{theorem}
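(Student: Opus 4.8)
The plan is to produce an explicit realizable matroid $M$ of rank $3$, prove it is not taut by exhibiting a proper erection via Theorem~\ref{thm:crapo-charac}, and then prove that no realization of $M$ can be non‑formal by showing that every matroid which could possibly arise as $\CM(\CA_F)$ is non‑realizable over \emph{every} field. Concretely, I would take $M$ of rank $3$ on a ground set $E$, prescribed by a list of non‑trivial lines $\CL_2^{>2}(M)$ chosen (with $|E|$ as small as possible) so that three conditions hold: (1) the prescribed incidences of points and lines are realizable, say over $\BBQ$, so that $M$ is indeed realizable — I would record explicit vectors in $\BBQ^3$ and check they realize exactly the prescribed dependencies; (2) the line collection is such that $M$ admits a proper erection; and, decisively, (3) every matroid $N'$ of rank $>3$ having the same points and lines as $M$ and admitting $M$ as a quotient contains \emph{both} the Fano matroid $F_7$ and the non‑Fano matroid $F_7^-$ among its minors — typically one of them realised as a contraction $N'/e$, using the extra rank available in $N'$. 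By Example~\ref{ex:FanoMatroid} together with Proposition~\ref{prop:minorrep}, condition (3) forces every such $N'$ to be realizable over no field at all. Finding one incidence pattern meeting (1)--(3) simultaneously is where the work lies; I would carry out a guided search (assisted by \Sage{} or \GAP{}) over small rank‑$3$ line configurations.

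\textbf{$M$ is not taut.}
Using Theorem~\ref{thm:crapo-charac}, I would display a set $\CF$ of blocks of $E$ and verify its three conditions: every block spans $M$ (has closure $E$), every block is $2$‑closed (closed under forming the line through any two of its points), and every basis of $M$ lies in exactly one block. The theorem then yields an erection $N$ of $M$; choosing $\CF$ so that $\rk(N)=4$ makes $N$ a proper erection. Since the truncation of $N$ is $M$, the matroids $M$ and $N$ share every flat of rank $\le 2$, hence have the same points and lines, and $M\prec N$ with $\CL(M)\subseteq\CL(N)$, so $M$ is a quotient of $N$. As $\rk(N)=4>3=\rk(M)$, the matroid $M$ is not taut in the sense of Definition~\ref{def:taut}. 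No realizability of $N$ is needed here; indeed, by condition (3), $N$ will itself fail to be realizable.

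\textbf{Every realization of $M$ is formal.}
Suppose, for contradiction, that $\CA$ is a realization of $M$ over a field $\BBK$ which is not formal. By Remark~\ref{rem:formality}, the formalization $\CA_F$ is an arrangement over $\BBK$ with $r(\CA_F)>r(\CA)=3$ whose matroid $N':=\CM(\CA_F)$ has the same points and lines as $M$ and admits $M$ as a proper quotient; being $\CM$ of an actual $\BBK$‑arrangement, $N'$ is realizable over $\BBK$. But $N'$ is precisely the kind of matroid controlled by condition (3) of the construction, so $N'$ has both $F_7$ and $F_7^-$ as minors; by Proposition~\ref{prop:minorrep} and Example~\ref{ex:FanoMatroid} this is impossible over any field. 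This contradiction shows $\CA$ is formal, and together with the previous two steps this proves Theorem~\ref{thm:main-theorem}.

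\textbf{Main obstacle.}
The crux is engineering the rank‑$3$ matroid $M$: it must be loose enough to be realizable and to admit a proper erection, yet so rigid above the line level that every rank‑increasing quotient‑lift preserving its points and lines is forced to contain the incompatible pair $F_7,F_7^-$ as minors. The Fano/non‑Fano dichotomy is the natural obstruction to target, but inserting it into every such lift without destroying realizability (or non‑tautness) of $M$ itself is delicate; I expect the bulk of the proof, and the part most amenable to computer verification, to be the enumeration of the erections of $M$ via Theorem~\ref{thm:crapo-charac} and the check that each of them — and hence every higher lift built from them — carries both of these minors.
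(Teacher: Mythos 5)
Your text does not address the statement you were asked to prove. The statement is Crapo's characterization of erections (Theorem \ref{thm:crapo-charac}): a family $\CF$ of blocks is the set of copoints of an erection of $M$ if and only if each block spans $M$, each block is $(r-1)$-closed, and each basis of $M$ lies in a unique block. What you wrote is instead a strategy for the paper's main result, Theorem \ref{thm:main-theorem} (constructing a realizable, non-taut matroid all of whose realizations are formal), and it \emph{uses} Theorem \ref{thm:crapo-charac} as a black box in both directions. As a proof of Theorem \ref{thm:crapo-charac} it is therefore vacuous, indeed circular: you cannot establish the characterization by invoking it. (For what it is worth, the paper itself does not prove this theorem either; it cites Crapo, and its actual proof of the main theorem differs from your sketch in that it pins down $N$ as the \emph{unique} nontrivial erection of $M$ via Proposition \ref{prop:onlyN}, rather than asking every lift with the same points and lines to contain both $F_7$ and $F_7^-$.)

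A genuine proof of Theorem \ref{thm:crapo-charac} has two halves, neither of which appears in your proposal. For necessity: if $N$ is an erection of $M$, its copoints are the rank-$r$ flats of $N$; each has $M$-closure equal to $E$ (since the truncation identifies the rank-$<r$ structure and the copoint has rank $r$ in $N$), each is $(r-1)$-closed because the $M$-closure of any $(r-1)$-subset is a flat of $N$ of rank at most $r-1$ contained in it, and each basis $B$ of $M$ is independent in $N$, so $\cl_N(B)$ is the unique copoint containing $B$. For sufficiency, which is the substantial direction: starting from a family $\CF$ satisfying (i)--(iii), one must \emph{construct} a rank-$(r+1)$ matroid $N$ --- for instance by declaring its flats to be the flats of $M$ of rank $<r$, the blocks of $\CF$, and $E$, or equivalently by defining a closure operator or rank function from these data --- and then verify the matroid (geometric lattice) axioms, in particular the exchange/covering property, and check that the truncation of $N$ is $M$. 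Conditions (i)--(iii) are exactly what make this verification go through, and exhibiting that is the content of the theorem; none of it can be replaced by the realizability, Fano/non-Fano, or formality arguments in your sketch.
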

\section{Proof of Theorem \ref{thm:main-theorem}}
\begin{figure}[b]
\includegraphics[width=0.65\linewidth]{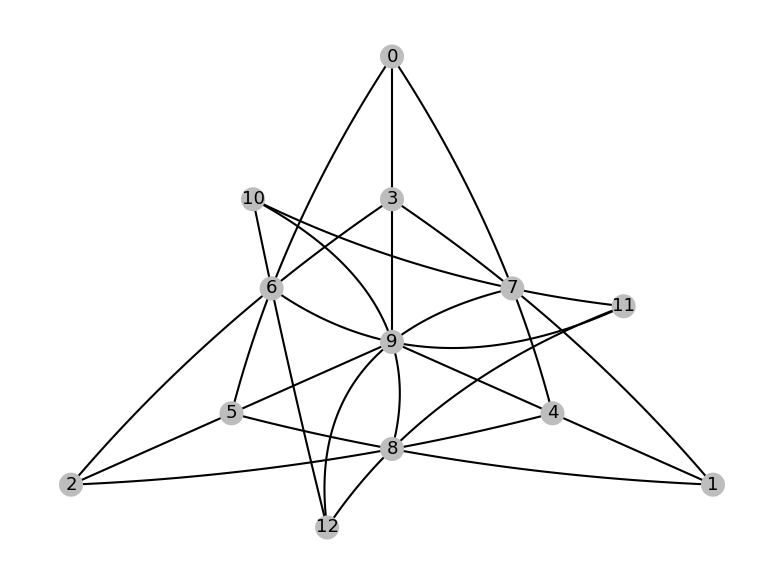}
\caption{The matroid $M$.}
\label{fig:matroid}
\end{figure}
Let $M$ be the simple matroid on $E=\{0,\dots,12\}$ of rank 3 with the following nontrivial flats in rank 2:
\[
\CL_2^{>2}(M)=\left\{
\begin{array}{ccccc}
\{0,3,9\}, & \{0,4,7\}, & \{0,5,6\}, & \{8,9,10\}, & \{7,10,11\}, \\
\{1,4,9\}, & \{1,3,7\}, & \{1,5,8\}, & \{6,9,11\}, & \{6,10,12\}, \\
\{2,5,9\}, & \{2,3,6\}, & \{2,4,8\}, & \{7,9,12\}, & \{8,11,12\} \\
\end{array}
\right\}.
\]
For a picture of $M$ see Figure \ref{fig:matroid}.
Note that for $X=\{0,\dots,8\}\subset E$, $M$ contains the underlying matroid of Example \ref{ex:yuzv} as a minor. For the subset $Y=\{6,\dots,12\}$ the non-Fano matroid $F_7^-$ is also a minor of $M$, see Figure \ref{fig:minors}.

\begin{figure}[]
\centering
\begin{subfigure}[b]{0.45\linewidth}
\includegraphics[width=\linewidth]{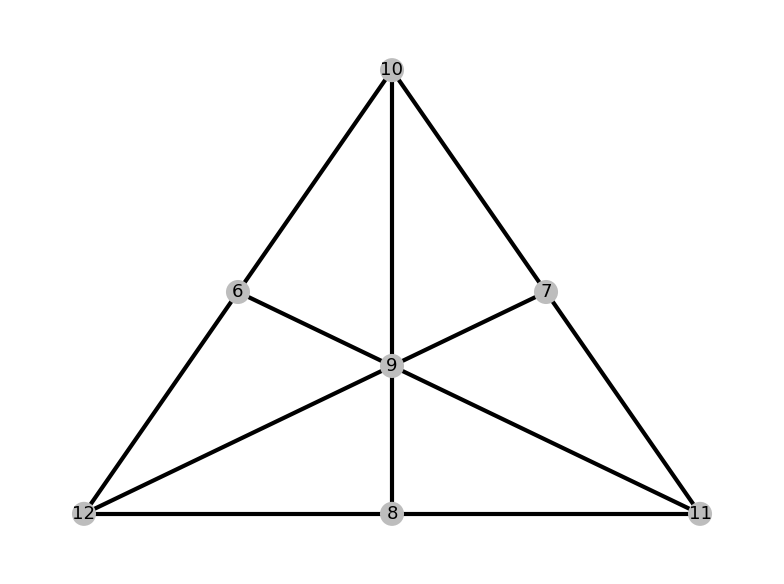}
\caption{non-Fano matroid $F_7^-$}
\end{subfigure}
\begin{subfigure}[b]{0.45\linewidth}
\includegraphics[width=\linewidth]{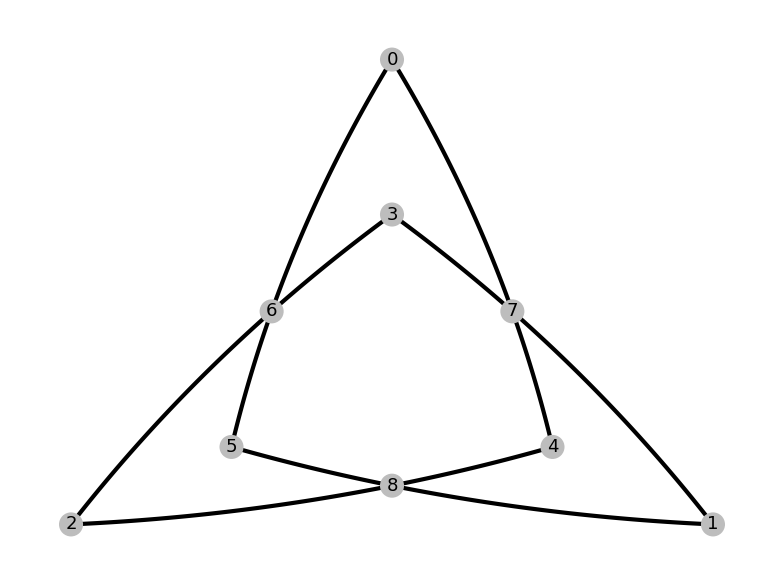}
\caption{matroid from Example \ref{ex:yuzv}}
\end{subfigure}
\caption{Two minors of $M$.}
\label{fig:minors}
\end{figure}

A realization of $M$ over $\BBQ$ is given by
\[
A=\left(
\begin{array}{rrrrrrrrrrrrr}
1 & 4 & 4 & 8 & 4 & 2 & 1 & 0 & 0 & 4 & 4 & 4 & 4 \\
1 & -2 & 1 & -1 & 1 & -1 & 0 & 1 & 0 & -5 & -5 & 5 & 5 \\
1 & 5 & -10 & 10 & 4 & -1 & 0 & 0 & 1 & 6 & -6 & -6 & 6 \\
\end{array}
\right),
\]
where the $i$-th column of $A$ belongs to the element $i\in E$.
We mention that no realization of $M$ can be free (since its characteristic polynomial does not factor). Furthermore, as a complex arrangement $A$ is not aspherical, since it has a \emph{simple triangle} (cf.~\cite[Cor.~3.3]{falkrand:homotopy1}). We have not verified whether other realizations of $M$ are not aspherical, yet we mention that there are realizations of $M$ that do not admit a simple triangle.

Next we define the matroid $N$ of rank 4 with the same points and lines as $M$. The non-trivial flats of rank 3 are given by 
\[
\CL_3^{>3}(N) = \left\{
\begin{array}{llll}
\{0, 1, 3, 4, 7, 9, 12\},	& \{0, 4, 5, 6, 7\},	& \{0, 4, 7, 10, 11\},	& \{0, 8, 11, 12\},	\\
\{0, 2, 3, 5, 6, 9, 11\},	& \{1, 2, 3, 6, 7\},	& \{1, 3, 7, 10, 11\},	& \{1, 6, 10, 12\},	\\
\{1, 2, 4, 5, 8, 9, 10\},	& \{0, 1, 5, 6, 8\},	& \{0, 5, 6, 10, 12\},	& \{2, 7, 10, 11\},	\\
\{6, 7, 8, 9, 10, 11, 12\},	& \{2, 3, 4, 6, 8\},	& \{2, 3, 6, 10, 12\},	& \{3, 8, 11, 12\},	\\
\{0, 3, 8, 9, 10\},			& \{0, 2, 4, 7, 8\},	& \{1, 5, 8, 11, 12\},	& \{4, 6, 10, 12\},	\\
\{1, 4, 6, 9, 11\},			& \{1, 3, 5, 7, 8\},	& \{2, 4, 8, 11, 12\},	& \{5, 7, 10, 11\},	\\
\{2, 5, 7, 9, 12\}
\end{array}
\right\}.
\]
Furthermore, $\CL_3(N)$ also contains every three-element subset of $E$ that is not in $\CL_2(N)=\CL_2(M)$ or a subset of a flat in $\CL_3^{>3}(N)$, i.e.
\[
\CL^{=3}_3(N)= \left\{
\begin{array}{llll}
\{0,1,10\}, & \{0,2,12\}, & \{3,4,10\}, & \{3,5,12\},\\
\{0,2,10\}, & \{1,2,12\}, & \{3,5,10\}, & \{4,5,12\},\\
\{0,1,11\}, & \{0,1,2\}, & \{3,4,11\}, & \{3,4,5\},\\
\{1,2,11\}, & & 			\{4,5,11\}&
\end{array}
\right\}
\]
Note that $\CL_3(N)$ satisfies the conditions from Theorem \ref{thm:crapo-charac}, so $N$ is an erection of $M$. This implies that $M$ is not taut. Next we show that $N$ is the only non-trivial erection of $M$. 
\begin{proposition}
\label{prop:onlyN}
We have $\mathcal{E}(M) = \{M,N\}$.
\end{proposition}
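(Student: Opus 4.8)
The plan is to apply Crapo's characterisation of erections by their copoints (Theorem \ref{thm:crapo-charac}). Since $\rk(M)=3$, a non-trivial erection $N'$ of $M$ has rank $4$, so its copoints are its rank-$3$ flats; in particular the ground set $E$, having rank $4$ in $N'$, is never a copoint. Hence, by Theorem \ref{thm:crapo-charac}, a non-trivial erection of $M$ is the same data as a family $\CF$ of \emph{proper} subsets of $E$, each of rank $3$ in $M$, each $2$-closed, such that every basis (every $3$-element independent set) of $M$ lies in exactly one member of $\CF$, and $N'$ is recovered from $\CF$. Since $N$ is already known to be such an erection, it suffices to prove that the only admissible $\CF$ is the one given by $\CL_3(N)$, i.e.\ the one assigning to each basis $B$ the unique rank-$3$ flat $\beta_N(B)$ of $N$ containing it.

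Call $T\subseteq E$ \emph{saturated} if it is $2$-closed and contains $\cl_2(B')$ for every basis $B'\subseteq T$, where $\cl_2$ denotes the smallest $2$-closed superset. Saturated sets are closed under intersection, so every $S\subseteq E$ has a smallest saturated superset $\sigma(S)$. Two observations: every block of every admissible $\CF$ is saturated (if $B\subseteq\beta$ then $\beta\supseteq\cl_2(B)$, and if a further basis $B'$ lies in $\cl_2(B)\subseteq\beta$ then $\beta\supseteq\cl_2(B')$, and so on); and each $N$-block $\beta_N(B)$ is saturated, being $2$-closed by Theorem \ref{thm:crapo-charac}. The first thing I would verify is that $\sigma(\{B\})=\beta_N(B)$ for every basis $B$ of $M$. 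Here ``$\subseteq$'' is immediate; for ``$\supseteq$'' one runs the saturation process from $B$ — adjoin the nontrivial lines of $M$ spanned by pairs of elements collected so far, then the new bases these create, and repeat. For the three-element flats of $N$ in $\CL^{=3}_3(N)$ no pair of their elements lies on a common nontrivial line, so saturation is immediate; for the larger flats a short cascade of line-adjunctions fills up the whole flat. Granting this, and using that in a simple rank-$3$ matroid every element of a spanning subset lies in a basis contained in that subset, one concludes that \emph{every saturated spanning subset of $E$ is a union of $N$-blocks} (in particular $N$ is the free erection of $M$).

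It remains to show that each admissible block is a \emph{single} $N$-block. Two distinct $N$-blocks are never nested (a basis of the smaller would lie in both, contradicting uniqueness), so if an admissible block $\beta$ strictly contained an $N$-block $\beta_1$ it would contain a point $x\in E\setminus\beta_1$, whence $\beta\supseteq\sigma(\beta_1\cup\{x\})$. The second computation is therefore: $\sigma(\beta_1\cup\{x\})=E$ for every $N$-block $\beta_1$ and every $x\in E\setminus\beta_1$ — one checks that saturating $\beta_1\cup\{x\}$ absorbs $N$-block after $N$-block until all of $E$ is reached. As admissible blocks are proper, this is a contradiction, so each admissible block equals some $\beta_N(B)$; since $\CF$ must cover every basis, $\CF=\CL_3(N)$ and $N'=N$. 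Together with the trivial erection this gives $\mathcal{E}(M)=\{M,N\}$.

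The conceptual content lies entirely in the reduction above; the main obstacle is the bookkeeping behind the two computational claims — the saturation cascades proving $\sigma(\{B\})=\beta_N(B)$ for bases inside the larger flats of $N$, and the checks $\sigma(\beta_1\cup\{x\})=E$ (at most $13-|\beta_1|$ of them for each of the $39$ blocks of $N$). These are elementary $2$-closure computations on a $13$-element ground set; they can be shortened using the automorphisms of $M$ or simply confirmed with \GAP\ or \Sage.
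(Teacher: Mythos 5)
Your reduction to Crapo's criterion is the right starting point (and matches the paper's), but both computational claims on which your argument rests are false, and they fail exactly where the real work of the proposition lies. First note that your notion of ``saturated'' collapses to ``$2$-closed'': a $2$-closed set $T$ automatically contains the smallest $2$-closed superset of each of its subsets, so $\sigma$ is just the $2$-closure operator on subsets of $E$. Now take the basis $B=\{0,1,12\}$ of $M$. No two of $0,1,12$ lie on a common nontrivial line of $M$, so $\sigma(\{B\})=\{0,1,12\}$; but the unique rank-$3$ flat of $N$ containing $B$ is $\{0,1,3,4,7,9,12\}$. Hence $\sigma(\{B\})\neq\beta_N(B)$, and $\{0,1,12\}$ is a $2$-closed spanning set that is not a union of $N$-blocks, contradicting your intermediate conclusion. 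Your second claim fails too: for the $N$-block $\beta_1=\{0,1,10\}$ and $x=2$ one checks $\sigma(\beta_1\cup\{x\})=\{0,1,2,10\}\neq E$. In fact there are thirteen $2$-closed spanning subsets of $E$ that are not copoints of $N$ (seven of size $3$, six of size $4$), and your argument as written would wrongly conclude that none exists. The proposition is nontrivial precisely because these extra candidates must be excluded by some means other than $2$-closure.

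The missing idea is to exploit the uniqueness in condition (iii) of Theorem \ref{thm:crapo-charac}. If, say, $X=\{0,1,12\}$ were a copoint of an erection $N'$, then $Z=\{0,1,3,4,7,9,12\}$ could not be one (both contain the basis $X$); but then the basis $\{0,1,3\}\subseteq Z$ would lie in no admissible block at all, since $Z$ is the only $2$-closed spanning set containing $\{0,1,3\}$. Repeating this for each of the seven $3$-element extra candidates, and noting that each $4$-element one contains a $3$-element one, is how the paper finishes. Your framework can be salvaged, but only after correctly enumerating the $2$-closed spanning sets and adding an argument of this kind; the saturation cascade alone cannot see the obstruction.
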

\begin{proof}
Suppose $N'\neq M$ is an erection of $M$. Then, the copoints of $N'$ have to fulfil the conditions (i)--(iii) from Theorem \ref{thm:crapo-charac}. The $2$-closed sets with respect to $M$ that span $M$ are precisely $\CL_3(N)\cup S$, where
\[
S=\left\{
\begin{array}{llll}
\{0,1,12\}, & \{3,4,12\}, & \{0,1,2,10\}, & \{3,4,5,10\},\\
\{0,2,11\}, & \{3,5,11\}, & \{0,1,2,11\}, & \{3,4,5,11\},\\
\{1,2,10\}, & \{4,5,10\}, & \{0,1,2,12\}, & \{3,4,5,12\},\\
\{6,7,8\} &&&
\end{array}
\right\}.
\]
We argue that no element of $S$ can be a copoint of $N'$, thus implying our statement. First assume that $X\in S$ is of cardinality $3$. Then $X$ is a basis of $M$, thus by Theorem \ref{thm:crapo-charac}(iii) there is a unique block $Z\in \CL_3(N)$ with $X\subset Z$. So if $X$ is a copoint of $N'$, then $Z$ is not. Now observe that for every choice of $X$, there are bases $B$ of $M$ with $B\subset Z$ that are not a subset of any other possible block in $\CL_3(N)\cup S$. For completeness, we specify a base for each of the seven choices for $X$:
\begin{itemize}
\item if $X=\{0,1,12\}$ or $X=\{3,4,12\}$, then $Z=\{0,1,3,4,7,9,12\}$ and $B=\{0,1,3\}$.
\item if $X=\{0,2,11\}$ or $X=\{3,5,11\}$, then $Z=\{0,2,3,5,6,9,11\}$ and $B=\{0,2,3\}$.
\item if $X=\{1,2,10\}$ or $X=\{4,5,10\}$, then $Z=\{1,2,4,5,8,9,10\}$ and $B=\{1,2,4\}$.
\item if $X=\{6,7,8\}$, then $Z=\{6,7,8,9,10,11,12\}$ and $B=\{7,8,9\}$.
\end{itemize}
Finally assume that $Y\in S$ is of cardinality $4$. This case reduces to the first one since there always is a $X\in S$ with $X\subsetneq Y$, so with the same reasoning as before, $Y$ is not a copoint of $N'$. Thus $N'=N$.
\end{proof}

Since $N$ is the only non-trivial erection of $M$, if $\CA$ is a non-formal arrangement with $M=\CM(\CA)$, then $N$ must be a (potentially trivial) quotient of $\CM(\CA_F)$. Hence, if $N$ is not realizable, any arrangement realizing $M$ must be formal.
\begin{proposition}
The matroid $N$ is not realizable over any field $\BBK$.
\begin{proof}
First, observe that the deletion $N-\{0,\dots,5\}$ is the non-Fano matroid $F_7^-$, so by Proposition \ref{prop:minorrep} and Example \ref{ex:FanoMatroid}, $N$ is realizable only over fields of characteristic different from $2$. Furthermore, it turns out that $F_7$ is a minor of $N$ as well. To see this, consider the contraction $P=N/\{6\}$ and consider parallel elements as a single point. The points of $P$ then are
\[
\CL_1(P) = \left\{
[0,5],\,[1],\,[2,3],\,[4],\,[7],\,[8],\,[9,11],\,[10,12]
\right\}
\]
and the non-trivial lines of $P$ are
\[
\CL_2^{>2}(P)=
\left\{
\begin{array}{ll}
\{[0,5],[1],[8]\}, & \{[1],[2,3],[7]\}, \\
\{[0,5],[2,3],[9,11]\}, & \{[1],[4],[9,11]\}, \\
\{[0,5],[4],[7]\}, & \{[2,3],[4],[8]\}, \\
\{[7],[8],[9,11],[10,12]\} & 
\end{array}
\right\}.
\]
Thus, $F_7=P-\{[10,12]\}$ is a minor of $N$, so again by Proposition \ref{prop:minorrep} and Example \ref{ex:FanoMatroid}, $N$ is not realizable over any characteristic.
\end{proof}
\end{proposition}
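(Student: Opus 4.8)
The plan is to exhibit two matroid minors of $N$ whose realizability imposes contradictory constraints on the characteristic of the ground field: a copy of the non-Fano matroid $F_7^-$, which by Example \ref{ex:FanoMatroid} forces $\operatorname{char}\BBK\neq 2$, and a copy of the Fano matroid $F_7$, which forces $\operatorname{char}\BBK=2$. Together with Proposition \ref{prop:minorrep}, the existence of both minors immediately shows that $N$ admits no realization over any field, which is the claim.

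For the first minor the natural candidate is the deletion $N-\{0,1,2,3,4,5\}$, i.e.\ the restriction of $N$ to the ground set $\{6,\dots,12\}$. Since $\{6,7,8,9,10,11,12\}\in\CL_3^{>3}(N)$ is a flat of rank $3$, this deletion is a simple matroid of rank $3$ on seven elements, and its rank-$2$ flats of size $\ge 3$ are precisely the members of $\CL_2^{>2}(M)$ contained in $\{6,\dots,12\}$. One then writes down an explicit bijection onto the labelling of $F_7^-$ from Example \ref{ex:FanoMatroid} under which lines correspond to lines; matching the incidence pattern (the number of three-point lines through each element) guides the choice. This identifies $F_7^-$ as a minor of $N$, so every realization of $N$ has characteristic different from $2$.

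For the second minor I would contract a single, carefully chosen element — the point $6$ is the right one — and then pass to the associated simple matroid. Two elements $i,j$ become parallel in $N/\{6\}$ exactly when $\{6,i,j\}$ is dependent in $N$, equivalently when $6,i,j$ are collinear (the lines of $N$ being those of $M$); reading off the lines of $M$ through $6$, namely $\{0,5,6\}$, $\{2,3,6\}$, $\{6,9,11\}$, $\{6,10,12\}$, gives the parallel classes $\{0,5\}$, $\{2,3\}$, $\{9,11\}$, $\{10,12\}$ alongside the singletons, so the simplification $P$ has eight points. Its lines come from the rank-$3$ flats of $N$ through $6$; in particular the flat $\{6,\dots,12\}$ collapses to one four-point line of $P$. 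Deleting a single point of that line leaves a simple rank-$3$ matroid on seven points with seven three-point lines covering all $\binom{7}{2}$ pairs and three lines through each point, i.e.\ the projective plane $PG(2,2)=F_7$. Hence $F_7$ is also a minor of $N$, forcing characteristic $2$.

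The two conclusions are incompatible, so $N$ is realizable over no field. The main obstacle is the second step: one must guess the correct element to contract, carry out the contraction and simplification directly from the flat data of $N$, and then recognize the resulting seven-point matroid precisely as the Fano matroid rather than some other rank-$3$ configuration — by contrast the first step is essentially routine once one notices that $\{6,\dots,12\}$ is a rank-$3$ flat of $N$.
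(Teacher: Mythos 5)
Your proposal is correct and follows essentially the same route as the paper: the deletion $N-\{0,\dots,5\}$ gives $F_7^-$ (so $\operatorname{char}\BBK\neq 2$), and contracting $6$, simplifying, and deleting one point of the resulting four-point line gives $F_7$ (so $\operatorname{char}\BBK=2$), a contradiction. The only imprecision is that the point deleted from the four-point line must be the class $[10,12]$ --- the unique point of that line lying on no other nontrivial line of $P$ --- since deleting $[7]$, $[8]$, or $[9,11]$ would destroy other three-point lines and not yield $F_7$.
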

\begin{corollary}
Every realization of the matroid $M$ is formal.
\end{corollary}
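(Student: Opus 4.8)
The plan is to argue by contradiction: I will assume that $M$ admits a realization $\CA$ over some field $\BBK$ that is \emph{not} formal, and deduce that $N$ would then be realizable over some field, contradicting the preceding proposition.

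So suppose $\CA$ is such a putative non-formal realization and write $M'' := \CM(\CA_F)$ for the underlying matroid of its formalization. By Remark~\ref{rem:formality}, $M = \CM(\CA)$ is a quotient of $M''$ with the same points and lines, and since $\CA$ is not formal we have $\rk(M'') = r(\CA_F) > r(\CA) = 3$. The first step is to cut $M''$ back down to rank $4$: let $M'$ be obtained from $M''$ by iterating the truncation $\rk(M'')-4$ times (vacuous if $\rk(M'')=4$). Each truncation is a quotient, so $M'$ is a quotient of $M''$; moreover truncating a matroid of rank at least $4$ leaves its flats of rank $\le 3$ — in particular its points and its lines — unchanged, so $M'$ has the same points and lines as $M''$, hence as $M$.

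The heart of the argument is to identify $M'$ with $N$. I claim that $M'$ is an erection of $M$, i.e.\ that the truncation $T'$ of $M'$ equals $M$. Both $T'$ and $M$ have rank $3$, so it suffices to check that $M$ is a quotient of $T'$, because a quotient of equal rank must be equal — a short argument from Definition~\ref{def:quotient}, using a maximal chain of flats up to $E$. For the weak-map half: an independent set of $M$ has at most $3$ elements, is independent in $M''$ since $M \prec M''$, and survives all the truncations down to rank $3$. For the containment $\CL(M)\subseteq\CL(T')$: the flats of $M$ are $\emptyset$, the points, the lines and $E$; the points and lines of $M$ coincide with those of $M'$, hence are flats of its truncation $T'$, while $\emptyset, E\in\CL(T')$ trivially. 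This gives $M = T'$, so $M'$ is a rank-$4$ erection of $M$; by Proposition~\ref{prop:onlyN} we have $\CE(M) = \{M, N\}$, and as $\rk(M')=4\neq 3$ this forces $M' = N$.

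Finally I would extract the contradiction. The matroid $M'' = \CM(\CA_F)$ is realized over $\BBK$ by the arrangement $\CA_F$, and $N = M'$ is obtained from $M''$ by repeated truncation; since the truncation of a realizable matroid is again realizable over a suitable (possibly larger) field — compose a representation with a generic linear surjection, or take a free extension and contract — it follows that $N$ is realizable over some field. This contradicts the preceding proposition, so no non-formal realization of $M$ exists. I expect the main obstacle to be the middle step: verifying that the rank-$4$ truncation of $\CM(\CA_F)$ is a genuine (nontrivial) erection of $M$ rather than something more degenerate. Once that is secured, Proposition~\ref{prop:onlyN} pins it down to $N$, and the transfer of realizability through truncation closes the argument.
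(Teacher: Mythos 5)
Your argument is correct and is essentially the route the paper takes: the paper compresses the whole proof into the sentence preceding the non-realizability proposition (a non-formal realization would force $N$ to be a quotient of $\CM(\CA_F)$, hence realizable, a contradiction). Your write-up supplies the two details the paper leaves implicit — that the rank-$4$ iterated truncation of $\CM(\CA_F)$ is an erection of $M$ and therefore equals $N$ by Proposition~\ref{prop:onlyN}, and that realizability passes to truncations over an extension field (which is what makes the quotient step actually yield a realization of $N$) — so it is, if anything, more complete than the original.
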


This completes the proof of Theorem \ref{thm:main-theorem}.

\bibliographystyle{alpha}
\bibliography{literature.bib}

\end{document}